\subjclass[2010]{46B15, 46B50, 46E30, 46G10}
\keywords{Banach function space,  vector measure integration, Dvoretsky-Rogers Theorem}
\thanks{This work was supported by the
 Ministerio de Econom\'{\i}a y Competitividad (Spain) under grants MTM2011-22417 (P. Rueda) and 
 MTM2012-36740-C02-02 (E.A. S\'anchez P\'erez).}
\newtheorem{theorem}{Theorem}
\newtheorem{proposition}[theorem]{Proposition}
\newtheorem{corollary}[theorem]{Corollary}
\newtheorem{example}[theorem]{Example}
\newtheorem{remark}[theorem]{Remark}
\newtheorem{lemma}[theorem]{Lemma}
\newtheorem{definition}[theorem]{Definition}
\title[The Dvoretsky-Rogers Th. for vector valued integrals]{ The Dvoretsky-Rogers Theorem for  vector valued  integrals on function spaces }
\author{P. Rueda and E.A. S\'anchez P\'erez}
\begin{document}

\maketitle

\begin{abstract}
 We show a  Dvoretsky-Rogers type  Theorem for the adapted version of the $q$-summing operators to the topology of the convergence of the vector valued integrals  on Banach function spaces. In the pursuit of this objective we prove that  the mere summability of the identity map does not guaranty that the space has to be finite dimensional, contrarily to the classical case. Some local compactness assumptions on the unit balls are required. Our results open the door to new convergence theorems and tools regarding summability of series of integrable functions and approximation in function spaces, since we may find infinite dimensional spaces in which convergence of the integrals ---our vector valued version of convergence in the weak topology--- is equivalent to the convergence with respect to the norm.    Examples and applications are also given.
\end{abstract}

\section{Introduction}

Summability  in Banach spaces is one of the main topics in
applied analysis, and results regarding the behavior of summable sequences are fundamental tool for its applications. Comparison between norm and weak absolutely summable series is at the origin of some classical problems in the theory of Banach spaces, and it was the starting point of the theory of $p$-summing operators. In this paper we are interested in providing new elements for the analysis of summability in the case of Banach function spaces by using a vector valued duality, that is provided by the vector measure integration theory on spaces $L^p(m)$ of integrable functions   with respect to a vector measure $m$.  These spaces represent, in fact,
all order continuous $p$-convex Banach lattices with weak unit. 
This theory supplies a distinguished element
---the vector valued integral--- for the study of summability in Banach spaces of measurable functions. It is well known that $fg\in L^1(m)$ whenever $f\in L^p(m)$ and $g\in L^{p'}(m)$, $1/p+1/p'=1$. In this case, the integral $\int fg \, dm$ determines a  vector valued bilinear map that yields to a duality: the  vector valued duality  between $L^p(m)$ and
$L^{p'}(m)$ (see \cite{illi,proc}).

 This vector valued duality is the  framework to study  natural topologies on spaces of integrable functions with respect to a vector measure, as the topology $\tau_m$ generated by the seminorms $\gamma_g(f):=\|\int fg\, dm\|$, $f\in L^p(m)$, when varying $g\in L^{p'}(m)$. This new vector valued point of view was first  taken into consideration in the study of convergence of sequences: the relation between the convergence of sequences in  spaces of vector measure integrable functions and the convergence of the corresponding
 vector valued integrals has been treated since the seventies (see for instance \cite{lewis1,lewis2}, \cite[Section 6]{cur}, \cite{libro} and the references therein).
%
%
In this paper we are interested in the summability of sequences  in $L^p(m)$ spaces induced by the vector valued duality, that is, when the role played by the weak topology is assumed by the topology $\tau_m$. It is worth mentioning that the
$p$-convexification $L^p(m)$ ($p \ge 1$) of the space $L^1(m)$ of
a vector measure $m$ was introduced as a tool for analyzing
summability (see \cite{illi}), trying to bring together vector
valued integration and the theory of $p$-summing operators in
Banach spaces (see also \cite{Fe,irene}).

The classical
Dvoretsky-Rogers Theorem can be stated as follows: the identity map in a Banach space $E$ is absolutely $q$-summing for some $1\leq q<\infty$, if and only if $E$ is finite dimensional. This paper is devoted to prove an extension for Banach function spaces of this result. In our context, the usual scalar duality is replaced by the vector valued duality given by a vector measure and the role of the weak topology in the Banach space is assumed by  the topology $\tau_m$.  In order to develop our study, we analyze some properties of the $(q,P^m)$-summing operators, that map $\tau_m$
summable sequences to norm summable sequences.  Our main result shows the necessity of adding some topological requirements on local compactness to characterize finite dimensional spaces in terms of the   $(q,P^m)$-summability of the identity map. The last section shows an application to the study of subspaces of $L^p(m)$ that are fixed by the integration operator. As a consequence of our Dvorestky-Rogers type theorem, we prove that, under the local  compactness hypotheses, only finite dimensional subspaces can be  fixed by the integration map.

\section{Preliminaries}

We use standard Banach space notation. Let $1 \le p \le \infty$. Then we write $p'$
for the extended real number satisfying $1/p+1/p'=1$. We follow the
definition of  Banach function space over a finite measure $\mu$
given in \cite[Def.1.b.17, p.28]{lint}. Throughout the paper $X(\mu)$ will denote an infinite dimensional Banach function space over $\mu$, i.e. $X(\mu)$ is a Banach lattice of $\mu-a.e.$-equal classes of $\mu$-integrable functions with a lattice norm and the $\mu$-a.e. order satisfying $L^\infty(\mu) \subseteq X(\mu) \subseteq L^1(\mu)$. We will also assume that $X(\mu)$ is order continuous, that is, for each decreasing sequence $f_n \downarrow 0$ in $X(\mu)$, $\lim_n \|f_n\|_{X(\mu)}=0.$

Let $X$ be a real Banach space and let $(\Omega, \Sigma)$ be a
measurable space. If
$m:\Sigma\to X$ is a  countably additive vector measure, we write $\mathcal{R}(m)$ for its range. The variation $|m|$ of $m$ is given by $|m|(A):=\sup_{B_i \in \pi}
\sum_{i=1}^n \|m(B_i)\|$, where the supremum is computed over all
finite measurable partitions $\pi$ of $A \in \Sigma$.
$\|m\|$ is the semivariation of $m$, i.e.$\|m\|(A):=\sup_{x^* \in B_{X^*}} | \langle m, x^* \rangle|(A)$,
$A \in \Sigma$, where $\langle m,x^* \rangle$ is the scalar
measure given by $\langle m,x^* \rangle(A):= \langle m(A),x^*
\rangle$. The Rybakov Theorem (see \cite[Ch. IX]{dies}) establishes that
  there exists $x^*\in
X^*$ such that $m$ is absolutely continuous with respect to a so called Rybakov measure
$|\langle m,x^*\rangle|$, that means that  $m(A) = 0$ whenever $|\langle
m(A),x^*\rangle|= 0$. For $1 \le p <
\infty$, a (real) measurable function $f$ is said to be
$p$-integrable with respect to $m$ if $|f|^p$ is integrable with
respect to all measures $|\langle m, x^* \rangle|$ and for each $A
\in \Sigma$ there exists an element $\int_A |f|^p  dm \in X$ such
that $\langle \int_A |f|^p dm, x^* \rangle =\int_A |f|^p d \langle
m, x^* \rangle$, $x^* \in X^*$.

The space $L^p(m)$, $1 \le p < \infty$, is defined to be the Banach lattice of all
($\mu$-equivalence classes of) measurable real functions defined
on $\Omega$ that are $p$-integrable with respect to $m$ when the
a.e. order and the norm
$$
\|f\|_{L^p(m)}:= \Big( \sup_{x^* \in B_{X^*}} \int |f|^p d
|\langle m, x^* \rangle| \Big)^{1/p}, \quad f \in L^p(m),
$$
are considered. It is an order continuous $p$-convex
Banach function space over any Rybakov measure $\eta$ for $m$
(see \cite[Proposition 5]{illi}; see also
\cite{fernandez-mayoral-naranjo-saez-sanchez perez} and
\cite[Ch.3]{libro} for more information on these spaces).
For the case $p=\infty$, $L^\infty(m)$ is defined as $L^\infty(\eta)$.
A
relevant fact is that for each $1 \le p \le \infty$, $L^p(m)
\cdot L^{p'}(m) \subseteq L^1(m)$ (see \cite[Prop.3.43]{libro} and
\cite[Sec.3]{illi}; see also
\cite{fernandez-mayoral-naranjo-saez-sanchez perez}). Moreover,
for each $f \in L^p(m)$
\begin{equation} \label{foru}
\|f\|_{L^p(m)}= \sup_{g \in B_{L^{p'}(m)}} \Big\| \int fg \, dm\Big\|.
\end{equation}
These relations allows to define the so called vector measure duality by using the
  integration operator
$I_m:L^1(m)\to X$, which is given by
$$
I_m(f)=\int_\Omega f \, dm, \ f\in L^1(m).
$$
We will use the symbol $\int f \, dm$ instead of $\int_\Omega f \, dm$ throughout the paper.
Relevant information on the properties of $I_m$ can be found in
\cite{okr2,ORiRoPi,orrp1}, and \cite[Ch.3]{libro} and the references
therein. Since for all $p>1$ the inclusion $L^p(m) \subseteq
L^1(m)$ always holds, the integration map can be defined also as
an operator $I_m:L^p(m) \to X$; we use the same symbol $I_m$ in this case for this operator.  It must be said that the
spaces $L^p(m)$ represent in fact the class of all order
continuous $p$-convex Banach lattices with a weak unit (see
\cite[Prop.2.4]{fernandez-mayoral-naranjo-saez-sanchez perez} or
\cite[Prop.3.30]{libro}), what means that our results can be
applied to a broad class of Banach spaces.

As we said in the Introduction, duality and vector valued duality for the spaces $L^p(m)$ are fundamental tools in this paper. Regarding duality, fix a Rybakov measure $\mu$ for $m$. Due to the order continuity
of $L^p(m)$, its dual space $L^p(m)^*$ ($1 \le p < \infty$) allows
an easy description; it coincides with its K\"{o}the dual (or
associate space) $(L^p(m))'$, that is,
$L^p(m)^*=(L^p(m))'=\{\varphi_g:g \in \mathcal H\}$, where
$$
\mathcal H:=\{g:\Omega\to \mathbb R \ \  \Sigma-\mbox{measurable
}:fg\in L^1(\mu) \mbox{ for all }f\in L^p(m)\}
$$
and the duality is given by
  $\langle \varphi_{g},f\rangle=\int_\Omega fg\, d\mu.$ Information about a precise description of $(L^p(m))'$
  can be found in \cite{ferrsa,Fe,FeRo,galaz,proc}. It must be said here that $(L^p(m))'$ and $L^{p'}(m)$ coincide only in very special situations, for instance for $m$ being a scalar measure. We will write  $\tau_w$ for the weak topology  on $L^p(m)$.

Regarding vector valued duality relations between $L^p(m)$ spaces, $1 \le p \le \infty$, the integration map defines the continuous bilinear map
$$
B_m:L^p(m) \times L^{p'}(m) \to X
$$
given by $B_m(f,g):= \int f g \, dm $, $f \in L^p(m)$, $ g\in L^{p'}(m)$.
Note that $B_m$ is both sides norming for $L^p(m)$ and $L^{p'}(m)$, that is, for every $f \in L^p(m)$,
$\|f\|_{L^p(m)}= \sup_{g \in B_{L^{p'}(m)}} \| \int fg \, dm \|$, and the same happens dually for the case of functions $g \in L^{p'}(m)$.

 In this paper we will consider the topology $\tau_m$ of  pointwise convergence of the
integrals, i.e. the locally convex topology  defined by the
seminorms $\gamma_g(f):= \|\int fg dm \|_X$, $f \in L^p(m)$, $g \in
L^{p'}(m)$.
The topology  $\tau_{w,m}$  of  pointwise weak convergence of the
integrals, is defined by the seminorms
$\gamma_{g,x^*}(f):= \langle \int fg dm, x^* \rangle$, $f \in
L^p(m)$, $g \in L^{p'}(m)$, $x^* \in X^*$. It  is also  a locally convex topology on $L^p(m)$.
It is easy to see that the norm topology is finer than all the others, and
$\tau_m$ and $\tau_{w}$ are finer than $\tau_{w,m}$, although $\tau_m$ and $\tau_{w}$ are not comparable in  general.  An exhaustive analysis of the $\tau_m$ topology has been done recently and can be found in \cite{RuSPTMNA} (see also the references therein).  The reader can find more information about it in \cite{fernandez-mayoral-naranjo-saez-sanchez
perez,Fe,FeRo,libro, RuSPpreprint,illi}. The following result establishes the basic relations between the quoted topologies.

\begin{proposition} \label{compact} (Proposition 1 in \cite{RuSPTMNA})

 Let $1\leq p \le \infty$.
 If $B_{L^p(m)}$ is $\tau_{m}$-compact then $\tau_{w,m}$ and $\tau_{m}$ coincide on bounded subsets of $L^p(m)$.
 Moreover, if $p>1$ and $B_{L^p(m)}$ is $\tau_{m}$-compact then the weak topology and $\tau_m$ coincide on bounded subsets of $L^p(m)$.
 Consequently, if $p>1$, $B_{L^p(m)}$ is $\tau_{m}$-compact if and only if $(L^p(m),\|\cdot\|_{L^p(m)})$ is
  reflexive and the weak topology and $\tau_{m}$ coincide on $B_{L^p(m)}$.
\end{proposition}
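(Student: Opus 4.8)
The plan is to prove the three assertions in turn, the first two by the elementary topological principle that a continuous bijection from a compact space onto a Hausdorff space is a homeomorphism, and the last as a formal consequence.

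First I would record two facts about $\tau_{w,m}$. Its defining seminorms satisfy $\gamma_{g,x^*}(f)=|\langle \int fg\,dm, x^*\rangle|\le \|x^*\|\,\gamma_g(f)$, so $\tau_{w,m}\subseteq\tau_m$; and $\tau_{w,m}$ is Hausdorff, since if $\gamma_{g,x^*}(f)=0$ for all $g\in L^{p'}(m)$ and $x^*\in X^*$ then, as $X^*$ separates the points of $X$, we get $\int fg\,dm=0$ for every $g$, whence $\|f\|_{L^p(m)}=0$ by \eqref{foru}. For the first assertion I would then apply the topological principle to the identity map $(B_{L^p(m)},\tau_m)\to (B_{L^p(m)},\tau_{w,m})$: it is a continuous bijection from a $\tau_m$-compact space onto a Hausdorff space, hence a homeomorphism, so $\tau_m=\tau_{w,m}$ on $B_{L^p(m)}$. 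Since every bounded set lies in some dilate $\lambda B_{L^p(m)}$ and $f\mapsto \lambda f$ is a homeomorphism for both topologies, the coincidence passes to all bounded subsets.

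For the second assertion ($p>1$) I would identify $\tau_{w,m}$ with the weak topology $\sigma(L^p(m),\mathcal{G})$ induced by the family $\mathcal{G}=\{\psi_{g,x^*}: g\in B_{L^{p'}(m)},\ x^*\in B_{X^*}\}\subseteq B_{L^p(m)^*}$, where $\psi_{g,x^*}(f)=\langle \int fg\,dm, x^*\rangle$ (boundedness by $\|\psi_{g,x^*}\|\le 1$ coming from the H\"older inequality for $L^p(m)$). By \eqref{foru}, $\sup_{g,x^*}|\psi_{g,x^*}(f)|=\|f\|_{L^p(m)}$, so $\mathcal{G}$ is norming. From the first assertion $B_{L^p(m)}$ is $\tau_{w,m}$-compact, i.e. $\sigma(L^p(m),\mathcal{G})$-compact; moreover $\tau_m=\tau_{w,m}\subseteq\tau_w$ on $B_{L^p(m)}$. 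Thus $\mathrm{id}\colon (B_{L^p(m)},\tau_w)\to (B_{L^p(m)},\tau_m)$ is a continuous bijection onto a Hausdorff space, and to invoke the topological principle once more it remains only to check that $(B_{L^p(m)},\tau_w)$ is compact, i.e. that $L^p(m)$ is reflexive. Granting this, the map is a homeomorphism, giving $\tau_w=\tau_m$ on $B_{L^p(m)}$ and, by dilation, on all bounded subsets.

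The reflexivity of $L^p(m)$ is the main obstacle, and it must be extracted from the $\sigma(L^p(m),\mathcal{G})$-compactness of the unit ball. Here the norming property alone is insufficient: compactness of $B_E$ for a norming $\sigma(E,\mathcal{G})$ forces only that $E$ is isometric to the dual $\overline{\mathrm{span}}(\mathcal{G})^*$, as the pair $(\ell^1,c_0)$ shows, and not that $E$ is reflexive. I would argue that $B_{L^p(m)}$ is weak$^{*}$-dense in the unit ball of $\overline{\mathrm{span}}(\mathcal{G})^*$ (by a Hahn--Banach separation using that $\mathcal{G}$ is norming) while being $\sigma(L^p(m),\mathcal{G})$-compact, hence weak$^{*}$-closed, so the two balls coincide and $L^p(m)$ is a dual Banach space. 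Upgrading ``dual space'' to ``reflexive'' is exactly where the hypotheses $p>1$ and order continuity enter: the $p$-convexity of $L^p(m)$ with $p>1$ excludes an $\ell^1$-type subspace (since $\ell^1$ is not $p$-convex for $p>1$), while order continuity excludes a copy of $c_0$, so by the lattice characterisation of reflexivity the space is reflexive. I expect this last step, reconciling the merely isometric dual-space structure with the lattice hypotheses, to be the technical heart of the argument.

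Finally, the ``Consequently'' equivalence is immediate. If $B_{L^p(m)}$ is $\tau_m$-compact, the second assertion yields $\tau_w=\tau_m$ on $B_{L^p(m)}$ and, as just shown, $L^p(m)$ is reflexive; conversely, if $L^p(m)$ is reflexive then $B_{L^p(m)}$ is weakly compact, and if in addition $\tau_w=\tau_m$ on $B_{L^p(m)}$, then $B_{L^p(m)}$ is $\tau_m$-compact.
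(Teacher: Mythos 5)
Your first assertion is proved correctly: the inequality $\gamma_{g,x^*}\le\|x^*\|\gamma_g$, the Hausdorffness of $\tau_{w,m}$ via \eqref{foru}, the compact-onto-Hausdorff principle, and the dilation argument are all fine. The reduction of the second assertion to reflexivity, and the Dixmier--Ng type step showing that $\sigma(L^p(m),\mathcal G)$-compactness of the ball makes $L^p(m)$ isometric to $\overline{\mathrm{span}}(\mathcal G)^*$, are also sound. (Note that the paper does not prove this proposition at all --- it quotes it from \cite{RuSPTMNA}; the proof there does not go through dual spaces but through the theorem of \cite{FeRo} that for $p>1$ the weak topology and $\tau_{w,m}$ coincide on bounded subsets of $L^p(m)$ with \emph{no} compactness hypothesis, after which your first assertion immediately gives the second, and reflexivity follows from weak compactness of the ball.)

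The genuine gap is in your last step, ``upgrading dual space to reflexive'', and as written it is fatal. The claim that \emph{order continuity excludes a copy of $c_0$} is false: $c_0$ is itself an order continuous Banach lattice, and there are vector measures for which $L^p(m)$ is lattice isometric to $c_0$ for every $p$ (take $m(A)=\sum_{n\in A}2^{-n}e_n\in c_0$ on $2^{\mathbb N}$; then $\|f\|_{L^p(m)}=\sup_n(|f_n|^p2^{-n})^{1/p}$). Such a space is order continuous, $p$-convex and non-reflexive. This also shows your argument proves too much: the only properties you invoke in the final step are order continuity and $p$-convexity, which \emph{every} $L^p(m)$ with $p>1$ enjoys, so if the step were valid every such space would be reflexive and the $\tau_m$-compactness hypothesis would be redundant --- the example above contradicts this. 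In particular, the dual-space structure you carefully established is never actually used. The correct way to exclude $c_0$ needs both ingredients: in an order continuous Banach lattice, any isomorphic copy of $c_0$ contains a further copy of $c_0$ that is complemented in the whole space (Lindenstrauss--Tzafriri \cite{lint}, Vol.~II, 1.c.4), whereas $c_0$ is not complemented in any space isomorphic to a dual space (a complemented subspace of a dual space is complemented in its bidual, and $c_0$ is uncomplemented in $\ell^\infty$).

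The exclusion of $\ell^1$ is also unjustified as stated: $p$-convexity passes to sublattices, not to closed subspaces, so ``$\ell^1$ is not $p$-convex'' only rules out \emph{lattice} copies of $\ell^1$ (via the upper $\ell^p$-estimate for disjoint sequences), not isomorphic copies. The conclusion is true for these spaces, but it requires a real argument --- e.g.\ the Kadec--Pe\l czy\'nski dichotomy combined with Rosenthal's subsequence splitting lemma: an $\ell^1$-basis sequence either has an almost disjoint subsequence (contradicting the upper $p$-estimate) or spans a subspace on which the $L^p(m)$- and $L^1(\mu)$-norms are equivalent, in which case splitting it into a disjoint part (weakly null by the upper $p$-estimate, $p>1$) and a uniformly integrable part (relatively weakly compact in $L^1(\mu)$) produces a weakly Cauchy subsequence in $L^1(\mu)$, which is impossible for an $\ell^1$-basis under equivalent norms. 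Alternatively, simply cite \cite{fernandez-mayoral-naranjo-saez-sanchez perez}, where it is shown that for $1<p<\infty$ the space $L^p(m)$ is reflexive as soon as it contains no copy of $c_0$; with that reference, your dual-space step plus the complemented-$c_0$ argument closes the proof.
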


In this paper we will make a local use of the duality defined by the integration bilinear map $B_m$. For $1 \le p \le \infty$ consider a subspace $P \subseteq L^p(m)$. We say that a subspace $R \subseteq L^{p'}(m)$ is an {\it $m$-dual} for $P$ if $R$ is $m$-norming for $P$, i.e. the function $f \rightsquigarrow \sup_{g \in B_R} \| \int fg \,dm\|$ gives an \textbf{equivalent} norm for $P$. We write $P^m$ for such a space $R$. In the same way, we say that a subspace $P^{m m}$ of $L^p(m)$ is an $m$-bidual of $P$ (with respect to an $m$-dual $P^m$) if $P \subseteq P^{m m}$ and $P^{m m}$ is $m$-norming for $P^m$. Notice that the inclusion $P \subseteq P^{mm}$ is not necessary for $P^{mm}$ to be $m$-norming for $P^m$. For instance, if $X(\mu)$ is an order continuous Banach function space and $m:\Sigma \to X(\mu)$ is the vector measure given by $m(A):= \chi_A,$ $A \in \Sigma$, then for $P=L^p(m)$ the space $P^m$ generated by the function $\chi_\Omega$ in $L^{p'}(m)$ is $m$-norming for $P$, and also the space $P^{mm}$ generated by $\chi_\Omega$ in $L^p(m)$ is $m$-norming for $P^m$. However, $P$ is not included in $P^{mm}$. But note also that given $P$, $P^m$ and $P^{mm}$ being norming, it can always be assumed that $P \subseteq P^{mm}$ just by defining the new $P^{mm}$ as the subspace of $L^p(m)$ generated by $P \cup P^{mm}.$
We will use this example later.

 We say that a triple $(P,P^m,P^{m m})$  of $m$-dual spaces as above is an $m$-dual system. We can define the topology $\tau_m(P^m)$ over $P$ as the one induced by all the seminorms $f \rightsquigarrow \| \int  f g \, dm \|$, $g \in P^m$, and $\tau(P^{m m})$ the topology for $P^m$ given by the seminorms $g \rightsquigarrow \| \int  f g \, dm \|$, $f \in P^{m m}$. A quick look at  the proof of Proposition 1 in \cite{RuSPTMNA} shows that a local version of this result is also true, i.e. a version of this result writing $P$ instead of $L^p(m)$ and $\tau_m(P^m)$ instead of $\tau_m$, where $P^m$ is an $m$-dual space.

Let us show some examples. A natural $m$-dual space of  $L^p(m)$ is $L^{p'}(m)$; in this case, we write simply $\tau_m$ for the topology $\tau_m(L^{p'}(m))$.  However, an $m$-dual space may be very small. For instance, if the integration map $I_m:L^1(m) \to X$ is an isomorphism, then the subspace generated by $\chi_\Omega \in L^\infty(m)$ is an $m$-dual for $L^1(m)$. Obviously, for every subspace $P \subseteq L^p(m)$, $L^{p'}(m)$ is an $m$-dual for $P$.


Let us finish  this section by defining a fundamental class of operators related to the summability of
sequences with respect to the $\tau_m$-topology. It
generalizes the class considered in Lemma 16 of \cite{illi} and in
\cite[Section 4.2]{Fe}. Theorem 17 in \cite{illi} provides
 a Pietsch type domination/factorization theorem for this family of operators. The local version of this result becomes the main tool for the proof of our results.

\begin{definition}
Let $1 \le p,q < \infty$, $R$ be a subspace of $L^{p'}(m)$ and $P$ a Banach subspace of $L^p(m)$.
Let $E$ be a Banach space. An
operator $T: P \to E$ is $(q,R)$-summing if there is a
constant $K$ such that for any finite set of functions
$f_1,...,f_n \in P$,
$$
\Big( \sum_{i=1}^n \|T(f_i)\|^q \Big)^{1/q} \le K \sup_{g \in
B_R} \Big( \sum_{i=1}^n \Big\| \int f_i g \, dm \Big\|^q \Big)^{1/q}.
$$
\end{definition}

Of course, the integration map $I_m:P \to X$ is always
$(q,L^{p'}(m))$-summing for all $1 \le p,q < \infty$. Indeed, if $f_1,...,f_n \in L^p(m)$, then
$$
\sum_{i=1}^n \| I_m(f_i) \|^q \le \|m\|(\Omega)^{q/p} \cdot
\sup_{h \in B_{L^{p'}(m)}} \Big( \sum_{i=1}^n \| \int f_i h
dm\|^q \Big).
$$

\section{The Dvoretsky-Rogers Theorem for the $m$-summability}

Throughout  this section, $1 \le p \le \infty$, $E$ and $X$ are Banach spaces, $m$ is an $X$-valued vector measure, $P$ is a subspace of $L^p(m)$ and $(P,P^m,P^{m m})$ is an $m$-dual system. We will consider the following sequential properties associated to compactness with respect to the $\tau_m$-topology.

\begin{definition}
 An operator $T:E \to P$ is $\tau_m(P^m)$-sequentially compact if every bounded sequence $(x_n)_n$ in $E$ has a subsequence $(x_{n_k})_k$
such that   $( \int  T(x_{n_k}) g dm )_k$ is a Cauchy sequence for each $g \in
P^m$.
\end{definition}

\begin{definition}
 An operator $T:P \to E$ is
$\tau_m(P^m)$-sequentially completely continuous if $\lim_n \|T(f_n)\|=0$  whenever $(f_n)_n$ is a bounded sequence
such that   $\lim_n \| \int h f_n dm \|=0$ for every $h \in
P^m$.
\end{definition}

If we assume that $\chi_\Omega\in P^m$ (we can always make $P^m$ big enough to have it), then $I_m:P\to X$ is $\tau_m(P^m)$-sequentially continuous. In the classical summing operators theory it is well known that any summing operator is weakly compact. However, not every $(q,P^m)$-summing operator is $\tau_m(P^m)$-sequentially compact. For instance, given a Banach function space $X(\mu)$, define $m(A):=\chi_A$, $A\in \Sigma$. Then $I_m:L^1(m)\to X(\mu)$ is an isomorphism which is $(q,L^\infty(m))$-summing but it is not $\tau_m(L^\infty(m))$-sequentially compact in general as in this case the norm topology and the $\tau_m=\tau_m(L^\infty(m))$ topology coincide. Then $B_{L^1(m)}$ is not $\tau_m$-compact unless $L^1(m)$ is finite dimensional. Let us see that under some compactness assumptions, the $(q,P^m)$-summing operators behave similarly as absolutely summing operators. We need first an easy lemma.

\begin{lemma} \label{inclu}
Let $1 \le p \le \infty$ and let $P$ be a subspace of $L^p(m)$ and $P^m$ an $m$-norming subspace for $P$. Consider a Banach space valued
$(r,P^m)$-summing operator $T:P \to E$. Then $T$ is $(q,P^m)$-summing for each $1 \le r\le q < \infty$.
\end{lemma}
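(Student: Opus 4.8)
The plan is to adapt the classical monotonicity argument for absolutely summing operators---namely that $p$-summing implies $q$-summing for $q \ge p$---to the present vector-valued setting, preserving the summing constant. First I would dispose of the trivial case $q=r$, where the statement is an identity, and assume $q>r$. Fix finitely many $f_1,\dots,f_n \in P$ and abbreviate $a_i := \|T(f_i)\|$ and $b_i(g) := \big\|\int f_i g\, dm\big\|$. I may assume $\sum_i a_i^q \neq 0$, since otherwise there is nothing to prove, and the indices with $a_i=0$ will contribute nothing to either side of the estimates below.

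The heart of the argument is a judicious choice of weights followed by H\"older's inequality. I would apply the $(r,P^m)$-summing hypothesis not to the $f_i$ themselves but to the scaled functions $a_i^{(q-r)/r} f_i \in P$, which gives
\begin{equation*}
\Big( \sum_{i=1}^n a_i^{\,q-r}\, a_i^{\,r} \Big)^{1/r} \le K \sup_{g \in B_{P^m}} \Big( \sum_{i=1}^n a_i^{\,q-r}\, b_i(g)^{r} \Big)^{1/r}.
\end{equation*}
The left-hand side is exactly $\big(\sum_i a_i^q\big)^{1/r}$. On the right, for each fixed $g$ I would apply H\"older's inequality to $\sum_i a_i^{q-r} b_i(g)^r$ with the conjugate exponents $q/(q-r)$ and $q/r$, obtaining
\begin{equation*}
\sum_{i=1}^n a_i^{\,q-r}\, b_i(g)^{r} \le \Big( \sum_{i=1}^n a_i^{\,q} \Big)^{(q-r)/q} \Big( \sum_{i=1}^n b_i(g)^{q} \Big)^{r/q}.
\end{equation*}

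Taking the supremum over $g \in B_{P^m}$ (legitimate since the first H\"older factor is independent of $g$) and combining the two displays yields
\begin{equation*}
\Big( \sum_{i=1}^n a_i^{\,q} \Big)^{1/r} \le K \Big( \sum_{i=1}^n a_i^{\,q} \Big)^{(q-r)/(qr)} \sup_{g \in B_{P^m}} \Big( \sum_{i=1}^n b_i(g)^{q} \Big)^{1/q}.
\end{equation*}
Dividing through by the finite, nonzero factor $\big(\sum_i a_i^q\big)^{(q-r)/(qr)}$ and observing that the exponent arithmetic collapses, $\tfrac1r - \tfrac{q-r}{qr} = \tfrac1q$, I arrive at
\begin{equation*}
\Big( \sum_{i=1}^n \|T(f_i)\|^{q} \Big)^{1/q} \le K \sup_{g \in B_{P^m}} \Big( \sum_{i=1}^n \Big\| \int f_i g\, dm \Big\|^{q} \Big)^{1/q},
\end{equation*}
which is precisely the $(q,P^m)$-summing estimate, and with the same constant $K$. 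I do not anticipate a serious obstacle: once the weights $a_i^{(q-r)/r}$ are chosen the argument is purely formal, and the $m$-norming role of $P^m$ is not even needed at this step. The only points requiring a little care are the exponent bookkeeping, pulling the supremum through the H\"older bound, and the degenerate indices $a_i=0$ (where the positive power $a_i^{(q-r)/r}$ vanishes and the corresponding terms drop out on both sides), all of which are routine.
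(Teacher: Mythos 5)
Your proof is correct, and it follows a route that differs in its mechanics from the paper's. Yours is the classical normalization-and-division argument for the inclusion theorem of summing operators (as in Diestel--Jarchow--Tonge): apply the $(r,P^m)$-summing inequality to the specially weighted functions $\|T(f_i)\|^{(q-r)/r} f_i$, use H\"older with exponents $q/(q-r)$ and $q/r$, and divide by the factor $\big(\sum_i \|T(f_i)\|^q\big)^{(q-r)/(qr)}$. The paper instead takes $s$ with $\tfrac1q+\tfrac1s=\tfrac1r$ and starts from the converse-H\"older duality formula $\big(\sum_i \|T(f_i)\|^q\big)^{1/q} = \sup_{(\lambda_i)\in B_{\ell^s}} \big(\sum_i |\lambda_i|^r\|T(f_i)\|^r\big)^{1/r}$, applies the $(r,P^m)$-summing hypothesis to the functions $\lambda_i f_i$ for every multiplier $(\lambda_i)\in B_{\ell^s}$, and removes the weights afterwards by H\"older with exponents $s/r$ and $q/r$. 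The two arguments are close cousins---after normalization, your weights $\|T(f_i)\|^{(q-r)/r}$ are exactly the extremal multipliers realizing the paper's supremum---but they are organized differently, and each has its advantages: the paper's sup formulation needs no division step and no discussion of the degenerate case $\sum_i\|T(f_i)\|^q=0$, while yours invokes only the direct H\"older inequality and keeps the constant transparently equal to $K$ throughout (incidentally, the paper's final displayed inequality drops the constant $K$, evidently a typo, and its closing remark refers to the $(1,P^m)$-summing constant where it should say $(r,P^m)$). Your observation that the $m$-norming property of $P^m$ is never used is also accurate; the paper's proof does not use it either, so the lemma is purely formal in both versions.
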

\begin{proof} Let $s$ be such that $\textstyle \frac 1q+\frac 1s=\frac 1r$.
Take a finite set of functions $f_1,...,f_n \in P$. Then
\begin{eqnarray*}
\big( \sum_{i=1}^n \|T(f_i)\|^q \big)^{1/q} &=& \sup_{(\lambda_i)_{i=1}^n \in B_{\ell^{s}}}\left( \sum_{i=1}^n | \lambda_i |^r\| T(f_i)\|^r\right)^{1/r}\\
& \le &\sup_{(\lambda_i)_{i=1}^n \in B_{\ell^{s}}} \left(\sum_{i=1}^n  \| T(\lambda_i f_i)\|^r\right)^{1/r}\\
& \le &
 \sup_{(\lambda_i)_{i=1}^n \in B_{\ell^{s}}} K \sup_{g \in B_{P^m}} \left(\sum_{i=1}^n  |\lambda_i |^r \Big\|\int f_i g dm\Big\|^r\right)^{1/r} \\
& \le & \sup_{g \in B_{P^m}} \left( \sum_{i=1}^n \Big\|\int f_i g dm \Big\|^q \right)^{1/q}\\
\end{eqnarray*}
 where $K$ is the constant associated to the $(1,P^m)$-summability of $T$.
\end{proof}

\begin{theorem} \label{lemafacto}
Let $1 \le q < \infty$.  Let $T:P \to E$ be a
$(q,P^m)$-summing operator. The following statements hold.
\begin{itemize}
\item[(i)] If
$B_{P^m}$ is $\tau_m(P^{m m})$-compact then $T$ is $\tau_m(P^m)$-sequentially completely continuous.
\item[(ii)] If $B_{P}$ is $\tau_m(P^m)$-compact and $B_{P^m}$ is $\tau_m(P^{m m})$-compact then $T$ is completely continuous.

\item[(iii)]
Finally, if $B_{P^m}$ is $\tau_m(P^{m m
})$-compact  and  $X$ is reflexive,
 then $T$ is also weakly compact.
\end{itemize}
\end{theorem}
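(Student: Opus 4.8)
The plan is to derive all three statements from a single tool: the local, vector-valued Pietsch domination theorem for $(q,P^m)$-summing operators (the local form of Theorem 17 in \cite{illi} mentioned above). First I would record that, since $P^{mm}$ is $m$-norming for $P^m$, the seminorms $g\rightsquigarrow\|\int fg\,dm\|$ with $f\in P^{mm}$ separate the points of $P^m$; hence under the standing hypothesis $B_{P^m}$ is a \emph{compact Hausdorff} space when equipped with $\tau_m(P^{mm})$. Assuming, as we may by the remark in Section~2, that $P\subseteq P^{mm}$, for each fixed $f\in P$ the scalar map $g\rightsquigarrow\|\int fg\,dm\|^q$ is a power of one of the defining seminorms of $\tau_m(P^{mm})$ and is therefore continuous on this compact space. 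The domination theorem then yields a constant $C$ and a regular Borel probability measure $\nu$ on $(B_{P^m},\tau_m(P^{mm}))$ with
\begin{equation}\label{eq:pietsch}
\|T(f)\| \le C\Big(\int_{B_{P^m}} \Big\|\int f g\,dm\Big\|^q\, d\nu(g)\Big)^{1/q},\qquad f\in P.
\end{equation}

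For (i), take a bounded sequence $(f_n)_n$ with $\lim_n\|\int h f_n\,dm\|=0$ for every $h\in P^m$. Fixing $g\in B_{P^m}\subseteq P^m$, the integrand in \eqref{eq:pietsch} tends to $0$; moreover, since $P^m$ is $m$-norming for $P$, we have $\|\int f_n g\,dm\|\le\sup_{g\in B_{P^m}}\|\int f_n g\,dm\|\le C'\sup_n\|f_n\|<\infty$ uniformly in $n$ and $g$, where $C'$ is the norming-equivalence constant. As $\nu$ is a probability measure, dominated convergence gives $\int_{B_{P^m}}\|\int f_n g\,dm\|^q\,d\nu(g)\to 0$, and \eqref{eq:pietsch} forces $\|T(f_n)\|\to0$. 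Hence $T$ is $\tau_m(P^m)$-sequentially completely continuous.

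For (ii), let $f_n\to f$ weakly in $P$; by linearity we may assume $f=0$, and $(f_n)_n$ is bounded. Each seminorm $\gamma_{g,x^*}$ defining $\tau_{w,m}$ is a weak seminorm on $P$, so weak convergence implies $f_n\to0$ in $\tau_{w,m}$. The extra hypothesis that $B_P$ is $\tau_m(P^m)$-compact lets me apply the local form of Proposition~\ref{compact}: $\tau_{w,m}$ and $\tau_m(P^m)$ coincide on bounded subsets of $P$, and since the $f_n$ and $0$ lie in a common ball, $f_n\to0$ in $\tau_m(P^m)$, i.e. $\|\int h f_n\,dm\|\to0$ for all $h\in P^m$. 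The hypothesis of (i) holds as well, so (i) gives $\|T(f_n)\|\to0$; thus $T$ is completely continuous.

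For (iii), by Lemma~\ref{inclu} I may assume $q>1$ (replace $q$ by $\max\{q,2\}$). For $f\in P$ the map $g\rightsquigarrow\int f g\,dm$ is norm-continuous from the compact space $B_{P^m}$ into $X$ (again because $f\in P^{mm}$), hence has separable range and defines $A(f)\in L^q(\nu;X)$; the resulting operator $A\colon P\to L^q(\nu;X)$ is bounded and \eqref{eq:pietsch} reads $\|T(f)\|\le C\|A(f)\|_{L^q(\nu;X)}$. Therefore $T$ factors as $T=\widehat T\circ A$ with $\widehat T\colon\overline{A(P)}\to E$ bounded. Since $X$ is reflexive and $1<q<\infty$, the Bochner space $L^q(\nu;X)$ is reflexive, so the weak closure of the bounded set $A(B_P)$ is weakly compact, and its image under the weak-to-weak continuous operator $\widehat T$ is a weakly compact set containing $T(B_P)$; hence $T$ is weakly compact. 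The main obstacle throughout is securing \eqref{eq:pietsch}: it is exactly the $\tau_m(P^{mm})$-compactness of $B_{P^m}$, together with $P\subseteq P^{mm}$ giving continuity of the integrands, that lets the vector-valued Pietsch argument produce $\nu$. Once \eqref{eq:pietsch} is in hand, (i) is routine dominated convergence, the delicate point of (ii) is the weak $\Rightarrow\tau_m(P^m)$ passage via Proposition~\ref{compact}, and in (iii) reflexivity of $L^q(\nu;X)$ genuinely requires $q>1$, which is why Lemma~\ref{inclu} is invoked.
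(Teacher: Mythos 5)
Your proposal is correct and follows essentially the same route as the paper: the local Pietsch domination/factorization through $L^q(\nu;X)$ over the compact space $(B_{P^m},\tau_m(P^{mm}))$, dominated convergence for (i), the local version of Proposition~\ref{compact} to pass from weak to $\tau_m(P^m)$ convergence on the compact ball $B_P$ in (ii), and Lemma~\ref{inclu} plus reflexivity of the Bochner space for (iii). The only differences are cosmetic (the paper phrases the weak-convergence step in (ii) via functionals $\delta_g\otimes x'$ on $C(B_{P^m},X)$, and raises $q$ to some $s>q$ rather than to $\max\{q,2\}$), so no further comment is needed.
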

\begin{proof}
(i) We have that $T$ satisfies that for every finite set $f_1,...,f_n \in
P$,
$$
\sum_{i=1}^n \| T(f_i ) \|^q \le K^q \sup_{g \in B_{P^m}}
\sum_{i=1}^n \Big\| \int  f_i g dm \Big\|^q.
$$
Taking into account that $(P,P^m,P^{m m})$ is an $m$-dual system,
it can be shown as in the case of  Pietsch's Domination Theorem for $q$-summing operators
 ---see Lemma 17 in \cite{illi} and make the obvious modifications--- that there is
measure $\eta$ on the compact space $(B_{P^m}, \tau_m(P^{m m}))$ such
that
$$
\|T(f)\| \le K \left( \int_{B_{P^m}} \Big\| \int f g \, dm \Big\|^q \,
d \eta(g)  \, \right)^{1/q}, \quad f \in P.
$$
 This easily gives that $T$ factorizes
through the following scheme (see Theorem 17 in \cite{illi})
$$
\xymatrix@R=3pc@C=3pc{
P \ar[r]^{T} \ar[d]_j & E \\
C_0 \ar[r]^{i} & S_0, \ar[u]_u }
$$
where $C_0$ is the subspace of $C(B_{P^m},X)$ given by the
functions $g \rightsquigarrow \int f g dm \in X$, $j$ the
isomorphism given by the identification of a function $f$ with  the corresponding vector valued function in $C_0$,
$S_0$ is the closure of the image of $C_0$ by the natural
inclusion/quotient map
$$
C(B_{P^m},X) \to
L^q(B_{P^m},\eta,X),
$$
where $\eta$ is a Radon probability measure on
$B_{P^m}$, and $u$ is the map that closes the diagram. Using
this scheme, an argument based on the Dominated Convergence
Theorem gives the result. Let $(h_n)_n$ be a bounded sequence in $P$ such that the sequence of integrals $(\| \int  h_n g
dm\|)_n$ is null for every $g \in P^m$. It is enough to prove that the
 sequence of functions $g \rightsquigarrow \int  h_n g
dm \in X$  satisfies
$\lim_n \int_{g \in B_{P^m}} \| \int h_n g dm\|^qd \eta(g)
=0$. For each $n$, the function $\varphi_n(\cdot):=\| \int
h_n \, \cdot \, dm\|$ belongs to the space $C(B_{P^m})$ of scalar continuous functions  defined on  the compact set
$(B_{P^m},\tau_m(P^{m m}))$. Since there is a constant $K>0$ such that
$\varphi_n(g) \le K \chi_{B_{P^m}}(g)$ for all $g \in
B_{P^m}$ and $n$, we can apply the Lebesgue Dominated Convergence Theorem to obtain that
 $$
 \lim_n \int_{B_{P^m}} \Big\| \int h_n g dm\Big\|^q d \eta(g) = \lim_n \int_{B_{P^m}} \varphi^q_n(g) d \eta(g)
  =0.
 $$
Therefore, using the factorization we obtain that $\lim_n\|T(h_n)\| =0$ and so $T$ is $\tau_m(P^m)$-sequentially completely continuous.

(ii)  Let $(f_n)_{n}$ be a  weakly null
sequence  in $P$. Since $(B_{P^m},\tau_m(P^{m m}))$ is  compact, using the factorization given in (i) and taking into account that each continuous operator is weak-to-weak continuous, we get that for each element
$\delta_g \otimes x' \in (C(B_{P^{m}},X))'$, $g \in
P^m$, $x' \in X'$, we have that
$$
\lim_n \langle \int f_n \,\cdot \,  dm, \delta_g \otimes x' \rangle = \lim_n \langle \int  f_n g dm, x' \rangle =0.
$$
Due to an easy adaptation of Proposition \ref{compact}, since we are assuming that
$(B_P, \tau_m(P^m))$ is compact, the topologies
$\tau_{w,m}(P^m)$, generated by the seminorms $\gamma_{x^*,g}(f):=\langle \int fg\, dm,x^*\rangle$ when varying $x^*\in X'$ and $g\in P^m$,  and $\tau_m(P^m)$ coincide on $B_{P
}$.

Consequently for each $g \in P^m$,
 $\lim_n \| \int f_n g dm \| =0.$ Using the domination in (i), we obtain the result on the complete continuity.

 (iii) Finally, by Lemma \ref{inclu} if $T$ is
 $(q,P^m)$-summing it is  $(s,P^m)$-summing for $q<s<\infty$, and so the reflexivity of $X$ implies the reflexivity of $L^s(B_{P^m},\eta,X)$. Thus,
the factorization of $T$ through a subspace of
$L^s(B_{P^m},\eta,X)$ gives that $T$ is weakly compact.

\end{proof}

The following result is a direct consequence of the statements (ii) and (iii) of Theorem \ref{lemafacto}.

\begin{corollary} \label{coroflex}
Suppose that $m$ is an $X$-valued vector measure and $X$ is reflexive.
Let $T:P \to P$ be a $(q,P^m)$-summing operator, and suppose that $B_{P}$ is $\tau_m(P^{m})$-compact and $B_{P^m}$ is $\tau_m(P^{m m})$-compact. Then $T \circ T$ is compact.
\end{corollary}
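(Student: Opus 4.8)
The plan is to combine the two conclusions of Theorem \ref{lemafacto} directly, since the hypotheses of the Corollary supply exactly what parts (ii) and (iii) require. First I would verify that the standing assumptions cover both statements. The compactness of $B_{P}$ in $\tau_m(P^m)$ together with the compactness of $B_{P^m}$ in $\tau_m(P^{mm})$ is precisely the hypothesis of (ii), so $T$ is completely continuous; and the compactness of $B_{P^m}$ in $\tau_m(P^{mm})$ together with the reflexivity of $X$ is precisely the hypothesis of (iii), so $T$ is weakly compact. Thus the single operator $T$ is simultaneously completely continuous and weakly compact.

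Next I would invoke the classical fact that the composition of a weakly compact operator with a completely continuous operator is compact, using here that $T$ plays both roles. Concretely, since $P$ is a Banach space, I would argue via the Eberlein--\v{S}mulian theorem. Let $(f_n)_n$ be a bounded sequence in $P$. Weak compactness of $T$ yields a subsequence $(f_{n_k})_k$ for which $(T f_{n_k})_k$ converges weakly in $P$, say to some $h \in P$. Complete continuity of $T$ then forces $T(T f_{n_k}) \to T(h)$ in the norm of $P$. Hence every bounded sequence admits a subsequence whose image under $T \circ T$ converges in norm, which is exactly the assertion that $T \circ T$ is compact.

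I expect no genuine obstacle here, as the analytic content is carried entirely by Theorem \ref{lemafacto}; the only points to be careful about are that the two distinct roles (weakly compact on the inner factor, completely continuous on the outer factor) are filled by one and the same operator $T$, and that Eberlein--\v{S}mulian is applicable because $P$, being a Banach subspace of $L^p(m)$, is itself a Banach space. One could equally phrase the argument without sequences, observing that $T(B_P)$ is relatively weakly compact and that a completely continuous operator maps relatively weakly compact sets to relatively norm-compact sets; but the sequential formulation matches most transparently the sequential flavour of complete continuity and weak compactness used above.
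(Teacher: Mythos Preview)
Your proposal is correct and follows exactly the approach the paper takes: the paper states that the Corollary is ``a direct consequence of the statements (ii) and (iii) of Theorem~\ref{lemafacto}'', and you have simply spelled out the standard argument that a completely continuous operator composed with a weakly compact operator is compact. There is no substantive difference between your write-up and the paper's intended proof.
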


%

In particular, if $T:P \to E$ is an isomorphism in Corollary \ref{coroflex},  we obtain that $P$ has to be finite dimensional.

\begin{example}\rm
{\it A proper infinite dimensional subspace of a space $L^2(m)$ with an $m$-dual system in which $P$, $P^m$ and $P^{m m}$ coincide, but the identity map is not $(q,P^m)$-summing for any $1 \le q < \infty$.} Take an infinite non-trivial measurable partition $\{A_i\}_{i=1}^\infty$ of the Lebesgue space $([0,1], \mathcal B,\mu)$, and define the vector measure $m:\mathcal B \to \ell^2$ given by $m(B):=\sum_{i=1}^\infty \mu(A_i \cap B) e_i$, where $\{e_i:i=1,... \}$ is the canonical basis of $\ell^2$ and $B \in \mathcal B$ (see Example 10 in \cite{SP}). Consider the (infinite dimensional closed) subspace $P$ of $L^2(m)$ generated by the functions  $\chi_{A_i}/\mu(A_i)^{1/2}$, $ i \in \mathbb N$. A direct calculation shows that for each $f=\sum_{i=1}^\infty \lambda_i \chi_{A_i}/\mu(A_i)^{1/2} \in P$,
$$
\|f\|_{L^2(m)}= \left( \sum_{i=1}^\infty |\lambda_i|^4 \right)^{1/4},
$$
and so $P$ is isometric to $\ell^4$ (see Proposition 11 in \cite{SP}). We can define the $m$-dual space $P^m \subseteq L^2(m)$ and the $m$-bidual space $P^{m m}$ as $P^m=P^{m m}=P$. It is clear that $P^m$ norms $P$ and $P^{m m}$ norms $P^m$. However, the identity map is not $(q,P^m)$-summing for any $1 \le q < \infty$. In order to see this, consider the sequence of functions
$(\frac{\chi_{A_i}}{\mu(A_i)^{1/2}})_{i=1}^\infty$. Then, if $1 \le q < \infty$, for each $k \in \mathbb N$ we get
$$
\sum_{i=1}^k \Big\|\frac{\chi_{A_i}}{\mu(A_i)^{1/2}}\Big\|^q_{L^2(m)} = k,
$$
but
$$
\sup_{g \in B_{P^m}} \sum_{i=1}^k \Big\| \int \frac{\chi_{A_i}}{\mu(A_i)^{1/2}} g \, dm \Big\|^q_{\ell^2}=
\sup_{(\tau_i)_{i=1}^\infty \in B_{\ell^4}} \sum_{i=1}^k |\tau_i|^{q}
$$
$$
\le \sup_{(\tau_i)_{i=1}^\infty \in B_{\ell^4}}  \Big( \sum_{i=1}^k |\tau_i|^{4q} \Big)^{1/4} \cdot
k^{3/4} \le k^{3/4}.
$$
This gives a contradiction and shows that the identity map cannot be $(q,P^m)$-summing for any $1 \le q < \infty$.
Note that the range of $m$ is relatively compact, since it can be included in the convex hull of a null sequence of $\ell^2$.
Corollary 8 in \cite{RuSPTMNA} establishes that for a reflexive and separable space $L^2(m)$ ---our space satisfy both requirements---, relative compactness of the range of $m$ implies
compactness of $(B_{L^2(m)},\tau_m)$. $B_P$ is $\tau_m$-closed, since by Proposition \ref{compact},  $\tau_m$ is finer than the weak topology on $L^2(m)$. This gives compactness of $(B_P,\tau_m(P^m))$ ---since the topology $\tau_m(P^m)$ is weaker than the topology $\tau_m$ on $B_P$--- and so compactness of $(B_{P^m},\tau_m(P^{m m}))$. The topological requirements of Corollary~\ref{coroflex} are then satisfied and $P$ is reflexive, but obviously the identity map is not compact. Since $\ell^4$ is not a Schur space, the identity map is not completely continuous. This shows that the summability condition in Theorem~\ref{lemafacto} (ii) and in Corollary~\ref{coroflex} cannot be dropped.
\end{example}

The following is our main result and gives a vector measure version of the Dvoretsky-Rogers Theorem.

\begin{theorem} \label{DR}
Let $E$ be a Banach space, $P$ be a subspace of $L^p(m)$  and $T:P \to E$ be an isomorphism.
The following statements are equivalent.
\begin{itemize}
\item[(i)] There is an $m$-dual system $(P,P^m,P^{m m})$ such that $B_P$ is $\tau_m(P^m)$-sequentially compact, $B_{P^m}$ is $\tau_m(P^{m m})$-compact and $T$ is $(q,P^m)$-summing for some ---and then for all--- $1 \le q < \infty$.
\item[(ii)] $P$ has finite dimension.
\end{itemize}
\end{theorem}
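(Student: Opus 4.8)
The plan is to treat the two implications asymmetrically: (ii)$\Rightarrow$(i) is a soft finite-dimensional construction, while (i)$\Rightarrow$(ii) carries the Dvoretsky--Rogers content and is where I would concentrate. The guiding idea for (i)$\Rightarrow$(ii) is to promote the two compactness hypotheses into the single statement that $T$ is a \emph{compact} operator, and then exploit that $T$ is an isomorphism: if $T$ is compact and invertible, then $\mathrm{id}_P=T^{-1}\circ T$ is compact, so $B_P$ is relatively norm compact and $P$ is finite dimensional by Riesz's theorem.

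For (i)$\Rightarrow$(ii) I would first feed the hypotheses into Theorem \ref{lemafacto}(i): since $B_{P^m}$ is $\tau_m(P^{mm})$-compact and $T$ is $(q,P^m)$-summing, $T$ is $\tau_m(P^m)$-sequentially completely continuous, i.e. $\|Tf_n\|\to 0$ for every bounded $(f_n)$ with $\|\int h f_n\,dm\|\to 0$ for all $h\in P^m$. Next I would use the $\tau_m(P^m)$-sequential compactness of $B_P$ to show directly that $T(B_P)$ is relatively compact. Given a sequence $(f_n)$ in $B_P$, extract a $\tau_m(P^m)$-Cauchy subsequence $(f_{n_k})_k$, so that $(\int f_{n_k}g\,dm)_k$ is Cauchy in $X$ for every $g\in P^m$. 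If $(Tf_{n_k})_k$ failed to be norm Cauchy, I could pick $\varepsilon>0$ and indices $k_j,l_j\to\infty$ with $\|Tf_{n_{k_j}}-Tf_{n_{l_j}}\|\ge\varepsilon$; but the bounded sequence $d_j:=f_{n_{k_j}}-f_{n_{l_j}}$ satisfies $\|\int d_j g\,dm\|\to 0$ for every $g\in P^m$ (a difference of tails of a Cauchy sequence), so sequential complete continuity forces $\|Td_j\|\to 0$, a contradiction. Hence $(Tf_{n_k})_k$ converges in $E$, $T(B_P)$ is relatively compact, $T$ is compact, and the isomorphism argument above yields $\dim P<\infty$. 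Note that only summability for the given $q$ is used here.

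For (ii)$\Rightarrow$(i) I would assume $\dim P<\infty$. By compactness of the unit ball of $P$ together with \eqref{foru}, select finitely many $g_1,\dots,g_N\in B_{L^{p'}(m)}$ whose span $P^m$ is $m$-norming for $P$, and likewise a finite-dimensional $m$-norming $P^{mm}$ for $P^m$. Since these spaces are finite dimensional, $\tau_m(P^m)$ and $\tau_m(P^{mm})$ are generated by finitely many seminorms (as $\|\int f(\sum c_jg_j)\,dm\|\le\sum|c_j|\,\|\int fg_j\,dm\|$), hence metrizable; being norm compact, $B_P$ and $B_{P^m}$ are therefore compact, and so sequentially compact, for these topologies. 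To check that $T$ is $(q,P^m)$-summing for every $q$, I would use that $f\rightsquigarrow(g\mapsto\int fg\,dm)$ embeds $P$ isomorphically into $C(B_{P^m},X)$ and that, by an extreme-point argument on $B_{(C(B_{P^m},X))^*}$, the quantity $\sup_{g\in B_{P^m}}(\sum_i\|\int f_ig\,dm\|^q)^{1/q}$ dominates the ordinary weak $\ell^q$-norm of the image family; the claim then reduces to the classical fact that every operator between finite dimensional spaces is $q$-summing. Lemma \ref{inclu} records that $(q,P^m)$-summability for one $q$ propagates to all larger exponents, and finite dimensionality yields it for every $q$, justifying the phrase \emph{for some---and then for all}.

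The main obstacle is the heart of (i)$\Rightarrow$(ii): converting the two \emph{one-sided} compactness assumptions into genuine norm compactness of $T(B_P)$. The delicate point is that sequential compactness of $B_P$ only produces a $\tau_m(P^m)$-Cauchy subsequence, not a convergent one, so one cannot pass to a limit inside $P$; the argument must instead be run entirely on the image side, through the difference sequence $d_j$ and the sequential complete continuity supplied by Theorem \ref{lemafacto}(i). This is precisely why the hypotheses are split, with full $\tau_m(P^{mm})$-compactness demanded on $B_{P^m}$ (needed to build the dominating Radon measure in the Pietsch-type factorization underlying Theorem \ref{lemafacto}) but only \emph{sequential} $\tau_m(P^m)$-compactness on $B_P$. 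A secondary technical check is the extreme-point comparison of the $(q,P^m)$-summing functional with the classical weak $\ell^q$-norm in $C(B_{P^m},X)$ used in the easy direction.
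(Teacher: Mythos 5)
Your proposal is correct, and its overall architecture is the paper's own: the paper also proves (i)$\Rightarrow$(ii) by combining Theorem \ref{lemafacto}(i) (sequential complete continuity of $T$) with the $\tau_m(P^m)$-sequential compactness of $B_P$ to conclude that the identity on $P$ is compact, and proves (ii)$\Rightarrow$(i) by the same finite $\tau_m$-covering of the unit sphere to manufacture finite-dimensional norming spaces $P^m$ and $P^{mm}$ (the paper makes the norming constant explicit, obtaining $\frac14\|f\|_{L^p(m)}\le\sup_{g\in B_{P^m}}\|\int fg\,dm\|$). In fact your difference-of-tails sequence $d_j$ spells out carefully a step the paper compresses into the sentence that $T^{-1}$ is $\tau_m(P^m)$-sequentially compact and hence the composition with $T$ is compact. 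The one genuine divergence is the final step of (ii)$\Rightarrow$(i), showing that $T$ is $(q,P^m)$-summing: the paper reduces to $q=1$ via Lemma \ref{inclu} and uses the elementary sign identity $\sup_{y'\in B_{P'}}\sum_i|\langle h_i,y'\rangle|=\sup_{\epsilon_i=\pm1}\bigl\|\sum_i\epsilon_i h_i\bigr\|_{L^p(m)}$ together with the norming inequality above, whereas you treat all $q$ at once by embedding $P$ into $C(B_{P^m},X)$ and invoking Bauer's maximum principle plus the characterization of the extreme points of $B_{C(K,X)^*}$ as functionals $\delta_g\otimes x^*$. Both routes ultimately rest on the classical fact that the identity on a finite-dimensional space is $q$-summing; the paper's version is more elementary and self-contained, while yours avoids the reduction through Lemma \ref{inclu} at the price of a nontrivial (though standard) extreme-point fact about $C(K,X)^*$. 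Two small points worth making explicit in your write-up: the selection of the finitely many $g_1,\dots,g_N$ does require the covering argument (choose $g_f$ with $\|\int fg_f\,dm\|\ge\frac12$ for each $f\in S_P$ and extract a finite subcover of the induced $\tau_m$-open cover), and the paper's definition of $m$-bidual demands $P\subseteq P^{mm}$, which you can arrange by replacing your $P^{mm}$ with the span of $P\cup P^{mm}$, still finite dimensional.
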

\begin{proof}

(i) $\Rightarrow$ (ii) Assume that $T$ is $(q,P^m)$-summing for a fixed $1 \le q < \infty$. Let us show that the composition $T \circ T^{-1}$ is compact. As a consequence of Theorem \ref{lemafacto}(i), we know that $T$ is $\tau_m(P^m)$-sequentially completely continuous. Since
$B_{P}$ is $\tau_m(P^m)$-sequentially compact, $T^{-1}:T(E) \to P$ is $\tau_m(P^m)$-sequentially compact. 
Then the identity map $T \circ T^{-1}:P\to P$ is compact, and so $P$ has finite dimension.

(ii) $\Rightarrow$ (i) Since $P$ is finite dimensional, we have that $(S_P, \| \cdot\|_{L^p(m)})$ is compact. The norm topology is finer than $\tau_m$, and so the unit sphere $(S_P,\tau_m)$ is compact too. For each element $f \in S_P$, take a norm one function $g_f \in L^{p'}(m)$ that satisfies that $1/2 \le \| \int f g_f \, dm \| \le 1$. 
Consider the $\tau_m$-open covering of $S_P$ given by the sets
$$
\Big\{ h \in L^p(m): \Big\|\int (h-f) g_f d m \Big\| <\frac 14, \, f \in S_P \Big\}.
$$
There is a finite subcovering given by a finite set $\mathcal C= \{ g_{f_i}: i=1,...,n \}$ of such functions $g_f$. Then we define $P^m$ to be the subspace generated by $\mathcal C.$
Note that for each $f \in S_P$ there is an index $i \in \{1,...,n \}$ such that $\|\int (f_i-f) g_{f_i} dm \| <\frac 14$ and so
\begin{eqnarray*}
\frac{1}{2} &\le&  \Big\| \int f_i g_{f_i} dm \Big\| \\
&\le& \Big\|\int f g_{f_i} dm \Big\| + \Big\|\int (f_i-f) g_{f_i} dm \Big\|\\
&\le&
\Big\|\int f g_{f_i} dm \Big\| + \frac 14 \\
& \le &\sup_{g \in B_{P^m}} \Big\| \int f g \, dm \Big\|  + \frac 14 \le \|f\|_{L^p(m)} \cdot \sup_{g \in B_{P^m}} \|g\|_{L^{p'}(m)}
+\frac 14\\
& \le& 1 +\frac 14.\\
\end{eqnarray*}
Consequently, for each $f \in P$,
\begin{equation}\label{111}
\frac 14 \|f\|_{L^p(m)} \le \sup_{g \in B_{P^m}} \Big\| \int f g \, dm \Big\|  \le  \|f\|_{L^p(m)}.
\end{equation}
Therefore, the space $P^m$ is $m$-norming for $P$, and $B_P$ is $\tau_m(P^m)$-sequentially compact since the norm topology and $\tau_m(P^m)$ coincides in the finite dimensional space $P$.

 Note that we can also define a finite dimensional subspace $P^{m m}$ containing $P$ that is $m$-norming for $P^m$ following the same procedure that in the definition of $P^m$. The finite dimension of $P^{m}$ proves also that $B_{P^m}$ is $\tau_m(P^{m m})$-compact.

 Finally, let us see that $T$ is $(q,P^m)$-summing for all $1 \le q$. By Lemma \ref{inclu} it suffices to prove that $T$ is $(1,P^m)$-summing. Write now $P'$ for the (usual topological) dual of $P$.  Since  $P$ is finite dimensional, we have that the identity map is $1$-summing, and so for each finite family $h_1,...,h_l \in P$
$$
\sum_{i=1}^l \|T(h_i)\| \le \|T\| \sum_{i=1}^l \|h_i\|_{L^p(m)} \le \|T\| K \sup_{y' \in B_{P'}} \sum_{i=1}^l |\langle h_i, y' \rangle|
$$
$$
= \|T\| K  \sup_{\epsilon_i =\pm 1 } \Big\| \sum_{i=1}^l \epsilon_i h_i \Big\|_{L^p(m)} \le
4 \|T\| K \sup_{\epsilon_i=\pm 1  } \sup_{g \in B_{P^m}}  \Big\|\int \sum_{i=1}^l \epsilon_i h_i g dm \Big\|
$$
$$
\le
4 \|T\| K \sup_{g \in B_R} \sum_{i=1}^n \Big\|\int h_i g dm\Big\|,
$$
where $K$ is the $1$-summing norm of the identity map  and the constant 4 comes from \eqref{111}. Therefore, $T$ is $(1,P^m)$-summing and so $(q,P^m)$-summing for every $ q\geq 1$.
\end{proof}

When $m$ is a scalar measure then the spaces $L^p(m)$ and $L^{p'}(m)$, $1<p<\infty$, are reflexive and hence their closed unit  balls are weakly compact or, equivalently, $\tau_m$-compact. Besides, in this case $(q,L^{p'}(m))$-summability coincides with the usual absolute $q$-summability for operators. Therefore  Theorem~\ref{DR} can be considered an extension of the classical Dvorestky-Rogers Theorem to  spaces of integrable functions with respect to a vector measure.

Let us present some examples that show that all the requirements in (i) are needed for the result to be true. Recall that $X(\mu)$ is an order continuous Banach function space over a finite measure space $(\Omega,\Sigma,\mu)$.

\vspace{.5cm}

\begin{remark} \label{contra}\rm

1. \textit{$\tau_m(P^m)$-sequential compactness of $B_P$ is a necessary requirement.} Consider the vector measure $m:\Sigma \to X(\mu)$ given by $m(A):=\chi_A$, $A \in \Sigma$. In this case, $L^1(m)=X(\mu)$ and the integration map $I_m:L^1(m)\to X(\mu)$ is an isomorphism. Take $P=L^1(m)$, that is not finite dimensional by assumption. The subspace $P^m$ of $L^{\infty}(m)$ generated by $\chi_\Omega$ is $m$-norming for $P$. Consider the $m$-bidual space $P^{m m}$ for $P$ defined as $P^{m m}= L^{1}(m)$. Obviously, $B_{P^m}$ is $\tau_m(P^{m m})$-compact. Since the seminorm on $L^1(m)$ defined by  $f \rightsquigarrow \| \int f \chi_\Omega dm \| = \|f\|$ coincides with the norm, we have that $ P^{m}$ is $m$-norming for $P$ but clearly $B_P$ is not $\tau_m(P^m)$-sequentially compact. Note that any other $m$-dual space for $P$ containing a function $g(w) > \delta$ for some $\delta >0$ satisfy the same property: $B_P$ is not compact for the topology $\tau_m(P^m)$. Observe also that the identity  $L^1(m)\to L^1(m)$ is $(q,P^m)$-summing for each $1 \le q < \infty$, since for each finite set $f_1,...,f_m \in L^1(m)$,
$$
\sum_{i=1}^n \| f_i \|^q_{L^1(m)} = \sum_{i=1}^n \Big\| \int f_i \chi_\Omega dm \Big\|^q_{X(\mu)}.
$$
Note that the identity is  $\tau_m(P^m)$ sequentially completely continuous trivially.
This example shows clearly the difference between $q$-summing and $(q,P^m)$-summing operators. In the first case, Alaoglu's Theorem assures that the unit ball of the dual space is weak*-compact, and this is enough to prove the Dvoretsky-Rogers Theorem via Pietsch's Factorization Theorem. In the second case, the topological properties for the unit balls of the spaces involved must be given as additional requirements. This means that the corresponding summability property for the isomorphism  does not assure our Dvoretsky-Rogers type theorem to hold.

\vspace{.5cm}

%

2. \textit{Not all the $m$-dual systems for a finite dimensional space $P$ satisfy the requirements of Theorem \ref{DR}.}
Consider again the vector measure given in Example 1. Take $P$ as the (finite dimensional) subspace of $L^1(m)$ generated by $\chi_\Omega$.
First,
 take the $m$-dual system $P=P^m=P^{m m}$, with the understanding that $P$ and $P^{mm}$ are subspaces of $L^1(m)$ and $P^{m}$ is a subspace of $L^\infty(m)$. In this case,
  $B_P$ is $\tau_m(P^m)$-sequentially compact, $B_{P^m}$ is $\tau_m(P^{m m})$-compact and the identity map on $P$ ---that coincides with the integration operator--- is $(q,P^m)$-summable for each $1 \le q < \infty$, providing all the requirements in (i) of Corollary \ref{DR}.

However,
 take now $P^m=L^\infty(m)$ and $P^{m m}=L^1(m)$. Assume that the vector measure $m$ has not relatively compact range (for example, when  $X(\mu)=L^r[0,1]$, $1 \le r < \infty$, see Example 3.61 in \cite{libro}). Then $B_P$ is $\tau_m(P^m)$-sequentially compact but $B_{P^m}=B_{L^\infty(m)}$ is not $\tau_m(P^{m m})$-compact, since the topology $\tau_m$ induced on $L^\infty(m)=L^\infty(\mu)$ by $L^1(m)$ coincides with the topology of $X(\mu)$ on  this space. To see this, just consider the seminorm
 $$
 {L^\infty(m)} \ni g \rightsquigarrow \Big\| \int \chi_\Omega g \, dm \Big\| =  \|g\|_{X(\mu)}.
 $$
 Thus if $B_{P^m}$ is $\tau_m(P^{m m})$-compact, this would imply compactness of $B_{L^\infty(m)}$ with respect to the topology of $X(\mu)$, and so it would imply that the range of the vector measure is relatively compact, since it is included in $B_{L^\infty(m)}$.

\vspace{.5cm}

3. \textit{The topological requirements for the $m$-dual system are not enough: the assumption on the $(q,P_m)$-summability of the isomorphism is also needed.}  Consider the vector measure $m$ defined as Lebesgue measure $\mu$ on $[0,1]$. Take any $1 < p < \infty$ and consider $P=L^p(\mu)$. Then we have that  $P^m=L^{p'}(\mu)$ is an $m$-dual for $L^{p}(\mu)$, and so the topology $\tau_m(L^{p'}(\mu))$ gives the weak topology for the reflexive space $L^p(m)$
(see Proposition \ref{compact}) . If we define the $m$-bidual $P^{m m}=L^p(m)$, we have that the topology $\tau_m(P^{m m})$ for $P^{m}$ is given by the weak topology for $L^{p'}[0,1]$. So both topological requirements in (i) of Corollary \ref{DR} are satisfied.  Of course, no isomorphism from $P$  is  $q$-summing for any $1 \le q < \infty$, and so no isomorphism  is $(q,P^m)$-summing, since in this case both definitions of summability coincides.

\end{remark}

\vspace{.5cm}

\begin{example}\rm
\textit{ The vector measure associated to the Volterra operator.} Let $1 \le r < \infty$ and let $\nu_r:\mathcal{B}([0,1]) \to L^r([0,1])$ be the Volterra measure, i.e. the vector measure associated to the Volterra operator. This measure is defined as
 $$
 \nu_r(A)(t):= \int_0^t \chi_A(u) \, du \in L^r([0,1]), \quad A \in \Sigma
 $$
 (see the explanation in \cite[p.113]{libro}; all the information about this measure can be found in different sections of \cite{libro}).  It is known that the range of $\nu_r$ is relatively compact. This is a consequence of the compactness of the Volterra operator (see the comments after \cite[Proposition 3.47]{libro}).

 Let $1 <p<\infty$, $m=\nu_r$ and consider a subspace $P$ of $L^p(m)=L^p(\nu_r)$. Assume that there is an $m$-dual space $P^m$ for $P$ such that $B_P \subseteq K B_{L^\infty(m)}$ for a certain $K>0$ (for example a subspace generated by a finite set of functions in $L^\infty(m)$ with $L^p(m)$-norm greater than $\delta >0$). Take $P^{m m}$ as $L^p(m)$. Then $B_{P^m}$ is  $\tau_m(P^{m m})$-compact as a consequence of Theorem 10 in \cite{RuSPTMNA}.
 In this case, we have a simplified version of our Dvoretsky-Rogers type theorem for the subspace $P$: $P$ is finite dimensional if there is $1 \le q < \infty$ such that the identity map is $(q,P^m)$-summing and $B_P$ is $\tau_m(P^m)$-sequentially compact.
%
%

\end{example}

\section{An application: subspaces of $L^p(m)$ that are fixed by the integration map}

 In what follows we use our results in order to obtain information about subspaces of $L^p(m)$ spaces that are fixed by the integration map $I_m$. This topic has been studied since the very beginning of the investigations on the structure of the spaces of integrable functions with respect to a vector measure, and several papers on this topic  have been published recently (mainly regarding subspaces that are isomorphic to $c_0$ and $\ell^1$, see \cite{ORSPDiss} and the references therein).
Let us show  an easy example.

\begin{example} \label{exfacil}\rm
Consider as in Remark \ref{contra} for $X(\mu)=L^r[0,1]$ the vector measure $m: \Sigma \to L^r[0,1]$ given by $m(A):=
\chi_A$, $r \ge 1$.
Consider the subspace $S$ generated by the Rademacher sequence in
$L^r[0,1]$. By the Kinchine inequalities, $S$ is a subspace in $L^r[0,1]$ that is isomorphic to $\ell^2$. Recall that $L^1(m)=L^r[0,1]$ and the integration map is an isomorphism. Obviously the restriction of the integral operator $I_m:L^1(m)\to L^r[0,1]$ to $S$ is in fact the identity map. For $p \ge 1$ we have that
$L^p(m)=L^{pr}[0,1]$, and again by the Kinchine inequalities $S$ is a subspace of $L^p(m)$ that is fixed by the integration map $I_m:L^p(m) \to L^r[0,1]$.
\end{example}
 As we noted after the definition of $(q,L^{p'}(m))$-summing operator, the integration map from $L^p(m)$ for any $1 \le p < \infty$ is always  $(q,L^{p'}(m))$-summing for every $q \ge 1$; in fact it is in a sense the canonical example of this kind of operators. Thus, our Dvoretsky-Rogers type result can be directly applied to obtain negative results on the existence of infinite dimensional subspaces of $L^p(m)$ that are fixed by $I_m$. We say that a subspace $P$ of $L^p(m)$ is {\it fixed by the integration map} if $I_m|_P$ is an
isomorphism into.

The following result  shows that under some compactness requirements,
any  subspace $S$ of $L^p(m)$ that is fixed by $I_m$ has to be finite dimensional.
For the case when the $m$-dual system that is considered is $P^m=L^{p'}(m)$ and $P^{m m}= L^p(m)$, conditions under which the balls of these spaces are $\tau_m$ compact are given in Corollary 8  of \cite{RuSPTMNA}.



\begin{corollary}\label{DRth}
Let $1 \le p < \infty$, and let $P$ be a subspace of $L^p(m)$ that is fixed by the integration map. If
there is an $m$-dual system for $P$ such that $B_{P}$ is $\tau_m(P^m)$-sequentially compact and $B_{P^m}$ is $\tau_m(P^{m m})$-compact, then $P$
 is finite dimensional.
\end{corollary}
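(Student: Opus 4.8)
The plan is to deduce the statement directly from Theorem~\ref{DR} by taking the fixing isomorphism itself as the operator $T$. Concretely, I would set $T:=I_m|_P:P\to I_m(P)\subseteq X$. Since $P$ is fixed by the integration map, $T$ is by definition an isomorphism into, so it plays exactly the role of the isomorphism in Theorem~\ref{DR} (with $E=I_m(P)$). The two topological requirements in Theorem~\ref{DR}(i) --- that $B_P$ be $\tau_m(P^m)$-sequentially compact and that $B_{P^m}$ be $\tau_m(P^{m m})$-compact --- are literally the hypotheses of the corollary. Hence the entire argument reduces to checking the one remaining clause of Theorem~\ref{DR}(i): that $T$ is $(q,P^m)$-summing for some $1\le q<\infty$. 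Once that is in place, the implication (i)$\Rightarrow$(ii) of Theorem~\ref{DR} gives that $P$ is finite dimensional, which is the assertion.

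First I would record why $I_m|_P$ is $(q,P^m)$-summing. The key observation is that the integration operator is nothing but integration against the single function $\chi_\Omega$: for every $f$ one has $I_m(f)=\int f\,dm=\int f\chi_\Omega\,dm$. Since $\chi_\Omega\in L^\infty(m)\subseteq L^{p'}(m)$, the $m$-dual $P^m$ can be taken to contain $\chi_\Omega$; this is automatic for the canonical choice $P^m=L^{p'}(m)$, $P^{m m}=L^p(m)$ singled out just before the statement. Writing $c:=\|\chi_\Omega\|_{L^{p'}(m)}$ and $g_0:=\chi_\Omega/c\in B_{P^m}$, for any finite family $f_1,\dots,f_n\in P$ and any $1\le q<\infty$ I would estimate
$$
\sum_{i=1}^n \|I_m(f_i)\|^q = c^q\sum_{i=1}^n \Big\|\int f_i g_0\,dm\Big\|^q \le c^q\sup_{g\in B_{P^m}}\sum_{i=1}^n \Big\|\int f_i g\,dm\Big\|^q,
$$
which shows that $T=I_m|_P$ is $(q,P^m)$-summing with constant $c$, for every $q$. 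Combining this with the two compactness hypotheses and invoking Theorem~\ref{DR} finishes the proof.

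The step I expect to be delicate is precisely this verification of $(q,P^m)$-summability \emph{relative to the given $m$-dual system}, rather than relative to the full space $L^{p'}(m)$ (against which $I_m$ is always summing, as noted after the definition of $(q,L^{p'}(m))$-summing operators). Because integration tests $f$ only against $\chi_\Omega$, the summing inequality uses one and the same test function $g_0$ for the whole family $f_1,\dots,f_n$, which forces a scalar multiple of $\chi_\Omega$ to belong to $P^m$. Thus the clean argument applies to $m$-dual systems with $\chi_\Omega\in P^m$ --- in particular to the canonical system $P^m=L^{p'}(m)$ --- and for those the corollary is immediate from Theorem~\ref{DR}. As a consistency check I would note that in this regime the conclusion is even transparent by hand: the seminorm $\gamma_{\chi_\Omega}(f)=\|\int f\chi_\Omega\,dm\|=\|I_m(f)\|$ is, via the isomorphism bound $\|I_m(f)\|\ge c'\|f\|_{L^p(m)}$, equivalent to the norm of $P$, so $\tau_m(P^m)$ coincides with the norm topology on $P$ and the $\tau_m(P^m)$-sequential compactness of $B_P$ already forces $P$ to be finite dimensional.
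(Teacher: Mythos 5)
Your proposal is essentially the paper's own proof: the paper disposes of this corollary in a single line, invoking Theorem~\ref{DR} for $T=I_m|_P$ together with the fact that the integration map is $(q,L^{p'}(m))$-summing for every $q$. So your reduction --- take the fixing isomorphism as the operator in Theorem~\ref{DR}, note that the two compactness clauses are literally the hypotheses, and check summability --- is exactly the route the paper takes.

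The delicate point you flag is, however, a genuine issue, and your treatment of it is more careful than the paper's. Since the defining inequality of $(q,R)$-summability takes the supremum over $B_R$, summability with respect to a \emph{smaller} space $R$ is a \emph{stronger} property; hence the $(q,L^{p'}(m))$-summability cited by the paper yields the $(q,P^m)$-summability required in Theorem~\ref{DR}(i) only when $P^m=L^{p'}(m)$, whereas the corollary's hypothesis allows an arbitrary $m$-dual system, and norming of $P^m$ alone does not transfer the summing inequality. Your argument via $g_0=\chi_\Omega/\|\chi_\Omega\|_{L^{p'}(m)}$ repairs this whenever $\chi_\Omega\in P^m$ --- which is precisely the hypothesis the paper itself imposes in Proposition~\ref{nece}, for exactly this reason --- and so it covers strictly more systems than the paper's literal argument (the canonical one included). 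Two caveats are worth recording. First, you cannot ``take $P^m$ to contain $\chi_\Omega$'' without loss of generality here: enlarging $P^m$ refines $\tau_m(P^m)$ and can destroy the assumed sequential compactness of $B_P$, so the enlargement trick from the preliminaries is not available. Second, for systems with $\chi_\Omega\notin P^m$ neither your argument nor the paper's establishes the needed summability, so the corollary in its full literal generality is proved by neither; and, as your own consistency check shows, in the regime where the argument does work ($\chi_\Omega\in P^m$ and $P$ fixed by $I_m$) the topology $\tau_m(P^m)$ coincides with the norm topology on $P$, so the conclusion already follows from the sequential compactness of $B_P$ alone, without Theorem~\ref{DR} or the compactness of $B_{P^m}$.
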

\begin{proof}
It is a consequence of Theorem \ref{DR} and the fact that the integration map is $(q, L^{p'}(m))$-summing for every $1\leq q\leq \infty$.
\end{proof}

In particular, the subspace $P$ generated by  the Rademacher functions that has been shown in Example \ref{exfacil} does not have an $m$-dual system satisfying the compactness requirements in Proposition \ref{DRth}.


\begin{remark}\rm
By \cite[Theorem 3.6]{fernandez-mayoral-naranjo-saez-sanchez perez},  if the vector measure $m$ has relatively compact range and $1 <p<\infty$, then the restriction of the integration map  to $L^p(m)$ is compact. Thus, if $S$ is a subspace of $L^p(m)$ that is fixed by the integration map, it has always finite dimension.
\end{remark}

%

To finish, let us  remark that as a consequence of the following result, the ideas that prove Corollary \ref{DRth} can be applied to maps acting in a subspace $P$ that is fixed by the integration map, others than the inclusion map.

\begin{proposition} \label{nece}
Let $1 \le p < \infty$.
Let $P$ be a subspace of $L^p(m)$ that is fixed by the integration map and let $P^m \subseteq L^{p'}(m)$ be an $m$-dual space of $P$ containing $\chi_\Omega$. Then
every operator $T:P \to F$ with values on a Banach space $F$ is $(q,P^m)$-summable for every $1 \le q < \infty$.
\end{proposition}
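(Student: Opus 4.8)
The plan is to verify the defining inequality of $(q,P^m)$-summability directly, by chaining three elementary estimates and avoiding any factorization or compactness machinery. The three ingredients are: the boundedness of $T$, the fact that $I_m|_P$ is bounded below (which is precisely what it means for $P$ to be fixed by the integration map), and the hypothesis $\chi_\Omega \in P^m$, which is the device that lets the supremum on the right-hand side recover the integration operator $I_m$.

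First I would fix $1 \le q < \infty$ and a finite family $f_1,\dots,f_n \in P$. Since $T$ is bounded, the left-hand side is immediately controlled by the $L^p(m)$-norms of the $f_i$:
$$
\Big( \sum_{i=1}^n \|T(f_i)\|^q \Big)^{1/q} \le \|T\| \Big( \sum_{i=1}^n \|f_i\|_{L^p(m)}^q \Big)^{1/q}.
$$
Because $P$ is fixed by the integration map, $I_m|_P$ is an isomorphism into, so there is a constant $c>0$ with $c\,\|f\|_{L^p(m)} \le \|I_m(f)\|$ for all $f \in P$; hence each $\|f_i\|_{L^p(m)}$ may be replaced by $c^{-1}\big\|\int f_i \, dm\big\|$. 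It then remains to bound $\big(\sum_i \|\int f_i \, dm\|^q\big)^{1/q}$ by the supremum appearing on the right-hand side of the summability inequality.

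This last step is where the hypothesis $\chi_\Omega \in P^m$ enters: since $\chi_\Omega$ is a nonzero element of $L^{p'}(m)$, the normalized function $g_0 := \chi_\Omega/\|\chi_\Omega\|_{L^{p'}(m)}$ lies in $B_{P^m}$, and $\int f_i g_0 \, dm = \|\chi_\Omega\|_{L^{p'}(m)}^{-1}\int f_i \, dm$, so that
$$
\Big( \sum_{i=1}^n \Big\| \int f_i \, dm \Big\|^q \Big)^{1/q}
= \|\chi_\Omega\|_{L^{p'}(m)} \Big( \sum_{i=1}^n \Big\| \int f_i g_0 \, dm \Big\|^q \Big)^{1/q}
\le \|\chi_\Omega\|_{L^{p'}(m)} \sup_{g \in B_{P^m}} \Big( \sum_{i=1}^n \Big\| \int f_i g \, dm \Big\|^q \Big)^{1/q}.
$$
Chaining the three estimates yields the conclusion with summing constant $K = \|T\|\,\|\chi_\Omega\|_{L^{p'}(m)}/c$.

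There is no serious obstacle here: the argument is elementary and self-contained. The only conceptual point worth stressing is that $\chi_\Omega \in P^m$ is exactly what converts the $\tau_m(P^m)$-type summability on the right into a statement about $I_m$, after which the ``fixed by $I_m$'' hypothesis supplies the lower bound and the boundedness of $T$ supplies the upper bound. In particular, and in contrast with Theorem \ref{lemafacto} and Corollary \ref{DRth}, this result requires no compactness of the unit balls whatsoever.
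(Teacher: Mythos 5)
Your proof is correct and follows essentially the same three-step chain as the paper's own argument: bound $\|T(f_i)\|$ by $\|T\|\,\|f_i\|_{L^p(m)}$, use that $I_m|_P$ is an isomorphism into to pass to $\big\|\int f_i\,dm\big\|$, and use the normalized $\chi_\Omega\in P^m$ to recover the supremum over $B_{P^m}$. Your explicit constant $\|T\|\,\|\chi_\Omega\|_{L^{p'}(m)}/c$ is in fact slightly cleaner than the paper's, which writes the normalization factor as $\|m\|(\Omega)^{q/p}$ where the exponent should involve $p'$ (since $\|\chi_\Omega\|_{L^{p'}(m)}=\|m\|(\Omega)^{1/p'}$).
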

\begin{proof}
Let $T:P\to F$ be an operator with values on a Banach space $F$, and let $f_1,...,f_n \in P$. Then
$$
  \sum_{i=1}^n \| T(f_i) \|^q \le  \|T\|^q \cdot  \sum_{i=1}^n \| f_i
  \|_{L^p(m)}^q
   \le \|T\|^q  \cdot \|(I_m)^{-1}\|^q \cdot \sum_{i=1}^n \Big\| \int f_n dm
 \Big\|^q
 $$
 $$
 \le  \|T\|^q \cdot \|(I_m)^{-1}\|^q \cdot \|m\|(\Omega)^{q/p} \cdot \sup_{g \in B_{P^m}} \sum_{i=1}^n \Big\| \int f_n g dm\Big\|^q.
 $$
 This gives the result.
\end{proof}




\noindent[Pilar Rueda] Departamento de An\'alisis Matem\'atico, Universidad de
Valencia, 46100 Burjassot - Valencia, Spain, e-mail: pilar.rueda@uv.es

\medskip

\noindent[Enrique A. S\'anchez P\'erez] Instituto Universitario de Matem\'{a}tica Pura y Aplicada, Universitat Polit\`ecnica de Val\`encia, Camino de Vera s/n, 46022 Valencia, Spain, e-mail: easancpe@mat.upv.es

\end{document}